\newcommand{\Xred}{X_{\mathrm{red}}}
\newcommand{\sI}{{\mathscr I}}
\newcommand{\sN}{{\mathscr N}}
\newcommand{\sO}{{\mathscr O}}
\renewcommand{\deg}[1]{\operatorname{deg}(#1)}
\newcommand{\Hom}{\operatorname{Hom}}
\theoremstyle{plain}
\newtheorem{thm}{Theorem}
\newtheorem{cor}{Corollary}
\newtheorem{fact}{Fact}
\newtheorem{question}{Question}
\theoremstyle{remark}
\newtheorem{rmk}{Remark}
\begin{document}

\title[On the compactified Jacobian of a ribbon]{On the irreducible components of the compactified Jacobian of a ribbon}

\author{Michele Savarese}
\address{Dipartimento di Matematica e Fisica,
Universit\`a Roma Tre, \\
Largo San Leonardo Murialdo 1,
00146 Roma (Italia)}
\email{msavarese@mat.uniroma3.it}

\keywords{Compactified Jacobians, Ribbons, Generalized Line Bundles, Multiple Curves}
\subjclass[2010]{14D20, 14H40, 14H60}

\begin{abstract}
In this paper we study the irreducible components of the compactified Jacobian of a ribbon $X$ of arithmetic genus $g$ over a smooth curve $\Xred$ of genus $\bar{g}$. We prove that when $g\geq 4\bar{g}-2$ the moduli space of rank $2$ semistable vector bundles over $\Xred$ is not an irreducible component and we determine the irreducible components in which it is contained. This answers a question of D. Chen and J.L. Kass in \cite{CK} and completes their results.
\end{abstract}

\maketitle

The aim of this short paper is to answer to \cite[Question 4.8]{CK}, so the conventions and notations are the same of the cited place. Let us recall them.

A \emph{curve} is an irreducible projective $k$-scheme of dimension $1$, where $k$ is an algebraically closed field; while a \emph{ribbon} is a curve whose reduced subscheme is a smooth curve and whose nilradical is locally generated by a nonzero and square-zero element. In other words it is a primitive multiple curve of multiplicity $2$ in the sense of \cite{DR}. Observe that this definition of ribbon is more restrictive than others present in literature; e.g. in \cite[\S I]{BE} irreducibility, projectiveness and dimension $1$ are not required and the reduced subscheme is just connected.

Throughout this article $X$ will denote a ribbon and $\sN\subset\sO_X$ its nilradical. 
It is immediate to check that $\sN$ can be seen as a line bundle on $\Xred$ and that its degree on $\Xred$ is $\deg{\sN}=2\bar{g}-1-g$, where $\bar{g}$ is the genus of $\Xred$ and $g=1-\chi(\sO_X)$ is the \emph{genus} of $X$.

A \emph{generalized line bundle} $\sI$ on $X$ is a pure (i.e. not having proper subsheaves of finite support) coherent sheaf whose generic stalk $\sI_{\eta}$ is isomorphic to $\sO_{X,\eta}$, where $\eta$ is the generic point of $X$ (in other words it is a generalized divisor in the sense of Hartshorne, because on primitive multiple curves pure is equivalent to reflexive). Its \emph{degree} is $\deg{\sI}=\chi(\sI)-\chi(\sO_X)$, while its \emph{index} $b(\sI)$ is the length of the torsion part of its restriction to $\Xred$. This is the definition of index given for any pure coherent sheaf on a ribbon in \cite[\S 6.3.7]{DR}, which is equivalent to the more involved one given in \cite[Definition 2.7]{CK}, that is specific for generalized line bundles. It is easy to check that $b(\sI)$ is a non-negative integer. Moreover, it holds also that $\deg{\sI}-b(\sI)$ is an even number; more precisely $\deg{\sI}-b(\sI)=2\deg{\overline{\sI}}$, where $\overline{\sI}$ is the unique line bundle on $\Xred$ such that $\sI|_{\Xred}=\overline{\sI}\oplus T$ with $T$ a torsion sheaf (see \cite[Fact 2.8]{CK} and its reference; note that in the statement of the cited fact there is a typographical error: $b(\sI')$ should be $\deg{\overline{\sI}}$).

Chen and Kass study the moduli space $\mathrm{M}(\sO_X,P_d)$ of semistable pure coherent sheaves of generic length $2$ on $X$ of fixed degree $d$. They show that the points of $\mathrm{M}(\sO_X,P_d)$ correspond either to $S$-equivalence classes of semistable generalized line bundles of degree $d$ or to those of direct images on $X$ of semistable rank $2$ vector bundles of degree $e=d+\deg{\sN}$ on $\Xred$. Observe that $\mathrm{M}(\sO_X,P_0)$ is a natural compactification of the Jacobian of $X$ when the line bundles on $X$ are stable, (i.e. when $g\ge 2\bar{g}$, by \cite[Lemma 3.2]{CK}). 

The main results about its global geometry are the following:
\begin{fact}\label{Fact1} \noindent
\begin{enumerate}
\item\label{Fact1:1} Let $j$ be an integer of the same parity of $d$ such that $0\le j\le -\deg{\sN}-1=g-2\bar{g}$ and let $\bar{Z}_j$ be the closure of the locus of stable generalized line bundles of index $j$. Then $\bar{Z}_j$ is a $g$-dimensional irreducible component of $\mathrm{M}(\sO_X,P_d)$ and any irreducible component containing a stable generalized line bundle is of the form $\bar{Z}_j$ for some $j$.
\item\label{Fact1:2} Let $\mathrm{M}(2,e)$ be the moduli space of semistable rank $2$ vector bundles of degree $e$ on $\Xred$. Then
\begin{enumerate}
\item\label{Fact1:2:1} If $g\le 2\bar{g}-1$ (i.e. $\deg{\sN}\ge 0$), it holds that $\mathrm{M}(\sO_X,P_d)$ is equal to $\mathrm{M}(2,e)$.
\item\label{Fact1:2:2} if $g>2\bar{g}-1$ (i.e. if $\deg{\sN}< 0$), all the irreducible components of $\mathrm{M}(\sO_X,P_d)$ contain a stable generalized line bundle, except at most one, which is $\mathrm{M}(2,e)$, if it exists. If $\bar{g}\le 1$, then $\mathrm{M}(2,e)$ is not an irreducible component of $\mathrm{M}(\sO_X,P_d)$. On the other hand it is so if $\bar{g}\ge 2$ and $g\le 4\bar{g}-3$, i.e. $\deg{\sN}\ge 2-2\bar{g}$.  
\end{enumerate}  
\end{enumerate}
\end{fact}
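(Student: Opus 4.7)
The plan is to prove Fact~\ref{Fact1} in three stages, matching its three substatements.

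For \ref{Fact1:1}, I would identify the stable locus of generalized line bundles of index $j$ with a natural quotient of $\mathrm{Pic}^{d+j}(X)\times\Xred^{(j)}$. By the local classification of pure rank-$1$ modules on a ribbon (free or of the form $(x^n,y)$), every such $\sI$ is isomorphic to $L\otimes\mathscr{J}_Z$ for a line bundle $L$ on $X$ and a length-$j$ subscheme $Z\subset\Xred\subset X$, where $\mathscr{J}_Z\subset\sO_X$ is the ideal sheaf. The naive parameter space has dimension $g+j$, but the map $(L,Z)\mapsto L\otimes\mathscr{J}_Z$ has generic fibres of dimension $j$: the sheaf of endomorphisms of $\mathscr{J}_Z$ strictly contains $\sO_X$, with quotient supported on $Z$ of total length $j$, and this produces an action under which the map is constant. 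The image is therefore irreducible of dimension $g$. Stability is an open condition, and since stable points of $\mathrm{M}(\sO_X,P_d)$ are smooth, any irreducible component containing a stable GLB coincides with the closure $\bar{Z}_j$.

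For \ref{Fact1:2:1}, I would exhibit $\sN\sI\cong\sN\otimes\overline{\sI}$ as a destabilising subsheaf of any GLB $\sI$. A short computation of the reduced Hilbert polynomials (the leading coefficient halves in passing from $\sI$ to the $\sO_{\Xred}$-module $\sN\sI$) gives the stability condition $\deg{\sN}<b(\sI)$, which has no solution in the admissible range when $\deg{\sN}\geq 0$. Hence no GLB is stable, and by Simpson's general theory $\mathrm{M}(\sO_X,P_d)$ coincides with the pushforward image $\mathrm{M}(2,e)$.

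For \ref{Fact1:2:2}, the first sentence is immediate from \ref{Fact1:1} together with the irreducibility of $\mathrm{M}(2,e)$. For the rest I would compare dimensions: $\dim\mathrm{M}(2,e)=4\bar{g}-3$ for $\bar{g}\geq 2$ (Narasimhan--Seshadri), while $\dim\bar{Z}_j=g$. When $g\leq 4\bar{g}-3$, one has $\dim\mathrm{M}(2,e)\geq g=\dim\bar{Z}_j$, and since $\mathrm{M}(2,e)$ is irreducible, closed, and disjoint from the stable-GLB locus (pushforwards of rank-$2$ bundles have nilradical-killed generic stalks, unlike GLBs), it cannot sit inside any $\bar{Z}_j$; hence it is itself an irreducible component. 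When $\bar{g}\leq 1$, the reversed inequality calls for an explicit construction, for each pushforward, of a family of stable GLBs of suitable index degenerating to it, showing $\mathrm{M}(2,e)\subset\bar{Z}_j$.

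The main obstacle is the dimension reduction in \ref{Fact1:1}: rigorously verifying that the fibres of $(L,Z)\mapsto L\otimes\mathscr{J}_Z$ are exactly $j$-dimensional requires a careful analysis of the sheaf of endomorphisms of $\mathscr{J}_Z$ via the local-to-global spectral sequence for $\mathrm{Ext}^1_{\sO_X}$, together with a study of the action of its units on line bundles. The explicit degeneration in the small-$\bar{g}$ case of \ref{Fact1:2:2} is similarly the technical core.
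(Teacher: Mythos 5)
The paper does not actually prove Fact~\ref{Fact1}: it imports it verbatim from \cite{CK} (Theorems 4.6 and 4.7), so your sketch has to be judged on its own merits, and it contains two genuine errors plus one deferral of the real work. The most serious error is the assertion that stable points of $\mathrm{M}(\sO_X,P_d)$ are smooth, which you use both to promote $\bar{Z}_j$ to an irreducible component and to classify the components meeting the stable locus. This is false, and the paper itself supplies the counterexample: by Theorem~\ref{Theorem1} and Corollary~\ref{Cor:1}, when $g\ge 4\bar{g}-2$ the direct image of a \emph{stable} rank~$2$ bundle is a stable point lying on several distinct components $\bar{Z}_j$, hence a singular stable point ($\mathrm{Ext}^2_X(\sI,\sI)$ does not vanish for non-locally-free sheaves on a non-reduced curve, so stable does not imply unobstructed here). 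The correct route is the one your own setup already provides: $\mathrm{M}(\sO_X,P_d)$ is the finite union of the closed irreducible sets $\bar{Z}_j$ and of the (closed) locus of pushforwards, so every component is one of these; and no $\bar{Z}_j$ can be contained in a $\bar{Z}_{j'}$ with $j\ne j'$ because both have dimension $g$ and the loci of index exactly $j$ and exactly $j'$ are disjoint dense open subsets of their closures. Your parametrization of index-$j$ generalized line bundles by $\operatorname{Pic}^{d+j}(X)\times\Xred^{(j)}$ and the fibre-dimension count via $\mathscr{E}nd(\mathscr{J}_Z)/\sO_X$ are sound in outline (the fibre is a torsor under the $j$-dimensional kernel of $\operatorname{Pic}(X)\to\operatorname{Pic}(\mathscr{E}nd(\mathscr{J}_Z))$), though the global statement that every generalized line bundle is of the form $L\otimes\mathscr{J}_Z$ needs the surjectivity of that restriction map on Picard groups, not just the local classification.

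The second error is in \ref{Fact1:2:1}: the (semi)stability criterion is stated backwards. The correct statement (\cite[Lemma 3.2]{CK}, which the paper uses explicitly in the proof of Theorem~\ref{Theorem1}) is that $\sI$ is semistable iff $b(\sI)\le -\deg{\sN}$ and stable iff $b(\sI)<-\deg{\sN}$. The subsheaf $\sN\sI\cong\sN\otimes\overline{\sI}$ that you propose gives, upon comparing reduced Hilbert polynomials, the condition $\deg{\sN}\le b(\sI)$, which is automatic in the relevant case and never destabilizes; the binding subobject is instead $\ker(\sI\to\overline{\sI})$ (equivalently, the quotient line bundle $\overline{\sI}$ on $\Xred$), whose extra torsion of length $b(\sI)$ flips the inequality. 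As written, your condition $\deg{\sN}<b(\sI)$ certainly has solutions when $\deg{\sN}\ge 0$, so your stated conclusion does not follow from your stated inequality; with the correct inequality it does, since $b\ge 0$ forces the absence of stable generalized line bundles (and for $\deg{\sN}=0$ one must still observe that the strictly semistable line bundles are $S$-equivalent to pushforwards). Finally, for \ref{Fact1:2:2} with $\bar{g}\le 1$ you only announce that an explicit degeneration is needed: that degeneration \emph{is} the content of the claim, and it is exactly the kind of argument (via the deformation criterion of Fact~\ref{Fact2}\ref{Fact2:3}) that the paper's Theorem~\ref{Theorem1} carries out in the complementary range $g\ge 4\bar{g}-2$; without it this part of the statement remains unproved.
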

The first point of the fact is a restatement of \cite[Theorem 4.6]{CK} (observe that we have rescaled the irreducible components there treated to make more apparent what they are: in the notation of \cite{CK} $\bar{Z}_j$ would be $\bar{Z}_{i}$ with $i=[j/2]+1$). The second one is a reformulation of \cite[Theorem 4.7]{CK}. These results had already been stated, although without proof, in \cite[Theorem 3.2(i)]{DEL} in the particular case in which the normal bundle of $\Xred$ in $X$, i.e. $\sN^{-1}$ is the canonical line bundle of $\Xred$. In the same article it was also pointed out that the analysis there developed holds in general for $\sN^{-1}$ of sufficiently large degree.

It is time to state explicitly \cite[Question 4.8]{CK}:
\begin{question}
Assume that $\bar{g}\ge 2$ and $g> 4\bar{g}-3$, or equivalently $\deg{\sN}< 2-2\bar{g}$. Is $\mathrm{M}(2,e)$ an irreducible component of $\mathrm{M}(\sO_X,P_d)$?
\end{question}
The answer is negative. In order to prove that, it is sufficient to show that, under these hypotheses, the direct image of any semistable rank $2$ vector bundle of degree $e$ on $\Xred$ deforms to a generalized line bundle of degree $d$ on $X$.  The following fact collects the ingredients of the proof.
\begin{fact}\label{Fact2} Let $C$ be a smooth curve of genus $\bar{g}$ on $k$ and let $E$ be a rank $2$ vector bundle of degree $e$ on it. Then the following hold:
\begin{enumerate}
\item\label{Fact2:1} $s(E):=e-2\max\{\deg{L}|L\subset E\text{ is a line bundle}\}\le\bar{g}$; equivalently if $M$ is a maximal line subbundle (i.e. a line subbundle of maximal degree) of $E$ then $\deg{M}\ge (e-\bar{g})/2$. In particular if $E$ is semistable (resp. stable) then $(e-\bar{g})/2\le\deg{M}\le e/2$ (resp. $(e-\bar{g})/2\le\deg{M}< e/2$).
\item\label{Fact2:2} For a general semistable $E$, it holds that
\[
s(E)=\left\{ \begin{array}{ll}
\bar{g}-1 & \text{ if $e-\bar{g}$ is odd}\\
\bar{g} &\text{ otherwise.}
\end{array} \right.
\]
In the second case, i.e. $s(E)=\bar{g}$, the maximal line subbundles of $E$ constitute a subscheme of dimension $1$ in $\operatorname{Pic}^{(e-s(E))/2}(C)$.
\item\label{Fact2:3} Let $X$ be a ribbon such that $\Xred=C$, let $\sN$ be its nilradical and let $b$ a non-negative integer of the same parity of $d=e-\deg{\sN}$. The direct image on $X$ of $E$ deforms to a generalized line bundle $\sI$ of degree $d$ and index $b$ if and only if there exists a line bundle $L\subset E$ of degree $(e+\deg{\sN}+b)/2$ and such that $\Hom((E/L)\otimes_{\sO_C}\sN,L)\neq\{0\}$.
\end{enumerate}
\end{fact}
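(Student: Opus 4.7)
Parts (i) and (ii) I would treat briefly, as they restate well-known results on vector bundles over smooth curves. Part (i) is Nagata's bound $s(E)\le\bar g$: for $M$ a line bundle on $C$ of degree $m\le(e-\bar g)/2$, Riemann--Roch gives $\chi(E\otimes M^{-1})\ge 2$, yielding a nonzero morphism $M\to E$ (by a standard argument, e.g.\ varying $M$ in its Picard variety); its saturation is a line subbundle of $E$ of degree at least $m$, and the (semi)stable refinements are immediate. Part (ii) is Lange's refinement of the same circle of ideas: parametrising pairs $(L\subset E)$ of fixed degrees and projecting to the moduli of $E$'s, a dimension count shows that for general semistable $E$ the Segre invariant attains the minimum allowed by the parity of $e-\bar g$; when $s(E)=\bar g$, maximal line subbundles $L\subset E$ of degree $(e-\bar g)/2$ correspond to sections of $\mathbb{P}(E)\to C$ with normal bundle $L^{-1}\otimes(E/L)$ of Euler characteristic $1$, so for generic $E$ they form a $1$-dimensional subscheme of $\operatorname{Pic}^{(e-\bar g)/2}(C)$.

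Part (iii) is the essential content. I first unwind the numerics. A generalized line bundle $\sI$ of degree $d$ and index $b$ on $X$ has $\deg\overline{\sI}=(d-b)/2=(e-\deg{\sN}-b)/2$ and torsion part of length $b$. In a flat degeneration $\sI\rightsquigarrow\pi_*E$, the surjection $\sI\twoheadrightarrow\overline{\sI}$ on $C$ specialises to a surjection $E\twoheadrightarrow\overline{\sI}$, whose kernel $L$ is necessarily a line subbundle of $E$ of degree $(e+\deg{\sN}+b)/2$. The space $\Hom((E/L)\otimes\sN,L)\cong H^0(C, L\otimes(E/L)^{-1}\otimes\sN^{-1})$ consists of sections of a line bundle of degree $b$, and a nonzero $\phi$ cuts out an effective divisor of degree $b$ that is identified with the support of the torsion of $\sI|_C$. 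Extracting $(L,\phi)$ from a given deformation yields the forward implication.

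For the converse I would construct an explicit family $\{\sI_t\}$ over $\operatorname{Spec} k[t]$ with $\sI_0\simeq\pi_*E$ and $\sI_t$, for $t\ne 0$, a generalized line bundle of degree $d$ and index $b$. The natural construction realises $\sI_t$ as the middle term of a family of extensions
\begin{equation*}
0\to\pi_*L\to\sI_t\to\pi_*(E/L)\to 0
\end{equation*}
on $X\times\operatorname{Spec} k[t]$, whose class is $[E]+t\phi$ in $\operatorname{Ext}^1_X(\pi_*(E/L),\pi_*L)\otimes k[t]$. Here $[E]$ is the pushforward of the extension $0\to L\to E\to E/L\to 0$, lying in the subspace $\operatorname{Ext}^1_C(E/L,L)$ of $\operatorname{Ext}^1_X(\pi_*(E/L),\pi_*L)$, while $\phi$ lies in the complementary summand $\Hom_C((E/L)\otimes\sN,L)$ of extensions ``coming out of the nilpotent direction''. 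At $t=0$ this recovers $\pi_*E$; for $t\ne 0$ the term $t\phi$ activates the nilpotent structure and yields a generalized line bundle whose index is the length of the vanishing scheme of $\phi$, namely $b$. The main obstacle is the verification that this family is flat over $k[t]$ and that its generic fiber is indeed a generalized line bundle of index exactly $b$; both points are local on $C$ near the divisor of $\phi$ and are settled by comparing with the standard classification of stalks of pure sheaves on a ribbon (cf.\ \cite[\S 6]{DR}, \cite[\S 2]{CK}).
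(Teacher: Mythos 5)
The paper gives no proof of this Fact: all three parts are imported from the literature (point \ref{Fact2:1} is the Segre--Nagata theorem in the form of \cite{LN}, point \ref{Fact2:2} is \cite{LN} following Maruyama, and point \ref{Fact2:3} is a restatement of Dr\'ezet's Th\'eor\`eme 7.2.3 in \cite{DR}). Measured against that, your sketches of \ref{Fact2:2} and \ref{Fact2:3} are reasonable outlines of the cited arguments --- the dimension count on pairs $L\subset E$, and for \ref{Fact2:3} the identification of $\Hom_C((E/L)\otimes\sN,L)$ with the ``nilpotent direction'' inside $\operatorname{Ext}^1_X$ together with the observation that the vanishing divisor of $\phi$ has degree exactly $b$ --- but the genuinely delicate points (genericity of the value of $s(E)$; flatness of the family and purity and index of its generic fibre) are precisely the content of the theorems being cited, and you defer them back to \cite{DR} and \cite{CK}, so in the end your treatment of these two parts carries no more information than the paper's bare citation.

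Your argument for \ref{Fact2:1}, however, contains a concrete numerical error. For a line bundle $M$ of degree $m=(e-\bar g)/2$, the bundle $E\otimes M^{-1}$ has rank $2$ and degree $\bar g$, so Riemann--Roch gives $\chi(E\otimes M^{-1})=\bar g+2(1-\bar g)=2-\bar g$, which is $\le 0$ for $\bar g\ge 2$ --- not $\ge 2$ as you assert. The condition $\chi(E\otimes M^{-1})\ge 1$ forces $m\le(e-2\bar g+1)/2$ and therefore only yields the trivial bound $s(E)\le 2\bar g-1$. The entire point of the Segre--Nagata theorem is that the sharp bound $\bar g$ is \emph{not} accessible by this Euler-characteristic count on a single $M$. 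Your parenthetical ``varying $M$ in its Picard variety'' gestures at the correct repair --- the locus $\{M\in\operatorname{Pic}^m(C): h^0(E\otimes M^{-1})>0\}$ is a degeneracy locus of expected dimension $\bar g-(1-\chi)=e-2m+1-\bar g$, which is non-negative exactly when $m\le(e+1-\bar g)/2$ --- but proving that this locus is actually nonempty when its expected dimension is non-negative is the substance of the theorem (this is the secant-variety argument of \cite{LN}, or Nagata's elementary-transformation argument on $\mathbb{P}(E)$ in \cite{N}), and it does not follow from Riemann--Roch alone. As written, your proof of \ref{Fact2:1} establishes only $s(E)\le 2\bar g-1$, which is too weak for the use made of the bound $s(E)\le\bar g$ in the proof of Theorem \ref{Theorem1}.
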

The first point is equivalent to a classical result about ruled surfaces, that had already been proved by C. Segre (obviously over $\mathbb{C}$, see \cite{S}) and had been rediscovered by M. Nagata (see \cite{N}); it can be found in this form in \cite{LN}. The first part of the second one is an immediate consequence of \cite[Proposition 3.1]{LN}, while its second part is \cite[Corollary 4.7]{LN}, although it has originally been proved in the context of ruled surfaces by M. Maruyama (see \cite{M}). The third assertion is a restatement of \cite[Th\'{e}or\`{e}me 7.2.3]{DR} (observe that in the cited article J.-M.  Dr\'{e}zet works over $\mathbb{C}$, but his arguments can be applied to curves over any algebraically closed field).

The following theorem is the main result of the paper.
\begin{thm}\label{Theorem1}
Let $X$ be a ribbon of genus $g$ such that $\bar{g}\ge 2$, where $\bar{g}$ is the genus of $\Xred$. If $g\ge 4\bar{g}-2$ (or equivalently $\deg{\sN}\le 1-2\bar{g}$), then the direct image on $X$ of any rank $2$ vector bundle $E$ of degree $e$ on $\Xred$ deforms to a generalized line bundle $\sI(h)$ on $X$ of degree $d=e-\deg{\sN}$ and of index $b_E(h)=-\deg{\sN}-s(E)-2h$ for any integer $0\le h\le [(-\deg{\sN}-s(E))/2]$. Moreover $E$ is (semi)stable if and only if $\sI(0)$ is (semi)stable and these equivalent conditions imply that $\sI(h)$ is stable for any $1\le h\le [(-\deg{\sN}-s(E))/2]$. 
\end{thm}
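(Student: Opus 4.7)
The approach is to apply Fact~\ref{Fact2}\ref{Fact2:3} with a carefully chosen line subsheaf $L\subset E$, verify the required Hom condition via Riemann--Roch using the Segre--Nagata bound Fact~\ref{Fact2}\ref{Fact2:1}, and then transfer (semi)stability across the resulting deformation.

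The degree prescribed by Fact~\ref{Fact2}\ref{Fact2:3} for index $b_E(h)$ is $(e-s(E))/2-h$. Let $M\subset E$ be a maximal line subbundle, of degree $(e-s(E))/2$ by Fact~\ref{Fact2}\ref{Fact2:1}, and for an effective divisor $D$ of degree $h$ on $\Xred$ set $L:=M\otimes\sO_{\Xred}(-D)$, regarded as a subsheaf of $E$ via $L\hookrightarrow M\hookrightarrow E$. Since $L$ is torsion-free, every homomorphism $(E/L)\otimes\sN\to L$ kills the torsion subsheaf $M/L\subset E/L$ and so factors through the line bundle $E/M$, whence
\[
\Hom\bigl((E/L)\otimes\sN,L\bigr)\cong H^0\!\bigl(\mathscr{L}_M\otimes\sO_{\Xred}(-D)\bigr),\qquad \mathscr{L}_M:=M\otimes(E/M)^{-1}\otimes\sN^{-1}.
\]
A direct computation gives $\deg{\mathscr{L}_M}=-s(E)-\deg{\sN}\ge\bar g-1$, using the hypothesis $\deg{\sN}\le 1-2\bar g$ and the bound $s(E)\le\bar g$. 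When this degree is at least $\bar g$, Riemann--Roch supplies a nonzero section of $\mathscr{L}_M$ whose zero divisor has degree $-s(E)-\deg{\sN}\ge h$; taking $D$ as a sub-divisor then produces the required nonzero section of $\mathscr{L}_M\otimes\sO_{\Xred}(-D)$, and Fact~\ref{Fact2}\ref{Fact2:3} yields $\sI(h)$.

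The main obstacle is the borderline case $s(E)=\bar g$ together with $\deg{\sN}=1-2\bar g$ (i.e.\ $g=4\bar g-2$), where $\deg{\mathscr{L}_M}=\bar g-1$ and effectivity becomes the non-trivial condition $\mathscr{L}_M\in W_{\bar g-1}\subset\operatorname{Pic}^{\bar g-1}(\Xred)$. Here I would exploit the positive-dimensional family of maximal line subbundles supplied by Fact~\ref{Fact2}\ref{Fact2:2}: under $M\mapsto M^{\otimes 2}\otimes(\det E)^{-1}\otimes\sN^{-1}$ its image is a positive-dimensional subvariety of $\operatorname{Pic}^{\bar g-1}(\Xred)$, and ampleness of a translate of the theta divisor forces this subvariety to meet $W_{\bar g-1}$, yielding a choice of $M$ for which $\mathscr{L}_M$ is effective.

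For (semi)stability, every $\sO_X$-submodule of the pushforward of $E$ is annihilated by $\sN$ and hence descends to $\Xred$, so (semi)stability of $E$ is equivalent to that of its pushforward; the deformation above places both in the same flat family in the projective moduli $\mathrm{M}(\sO_X,P_d)$, and (semi)stability behaves well under specialization there, giving the equivalence $E$ (semi)stable $\iff\sI(0)$ (semi)stable. For $h\ge 1$ with $E$ semistable, $\sI(h)$ has index $b_E(h)<b_E(0)$, lies in the component $\bar Z_{b_E(h)}$ of Fact~\ref{Fact1}\ref{Fact1:1}, and a direct slope inspection using Chen--Kass's stability criterion for generalized line bundles, combined with semistability of $E$, yields strict stability.
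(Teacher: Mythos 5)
Your construction of the deformations is essentially the paper's own argument: you take a maximal line subbundle $M$, identify $\Hom((E/L)\otimes\sN,L)$ with $H^0(M\otimes(E/M)^{-1}\otimes\sN^{-1}\otimes\sO_{\Xred}(-D))$, settle the generic case by Riemann--Roch together with the Segre--Nagata bound $s(E)\le\bar g$, and handle the borderline case $\deg{\sN}=1-2\bar g$, $s(E)=\bar g$ by moving $M$ in the one-dimensional family of Fact~\ref{Fact2}\ref{Fact2:2} and intersecting the resulting curve in $\operatorname{Pic}^{\bar g-1}(\Xred)$ with (a translate of) the theta divisor. This matches the paper step for step; your packaging of the case $h\ge 1$ via a subdivisor $D$ of the zero locus of a section of $\mathscr{L}_M$ is equivalent to the paper's choice of $L(h)\subset M$ containing $(E/M)\otimes\sN$. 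Two small points you leave implicit and should record: that $b_E(0)=-\deg{\sN}-s(E)\ge 0$ (needed for the index to make sense), and that the map $M\mapsto M^{\otimes 2}\otimes(\det E)^{-1}\otimes\sN^{-1}$ is non-constant on the family of maximal subbundles (squaring is finite on $\operatorname{Pic}$), so the image really is a curve.

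The genuine gap is in the (semi)stability claim. You argue that the pushforward of $E$ and $\sI(0)$ ``lie in the same flat family'' and that ``(semi)stability behaves well under specialization.'' It does not: semistability is an \emph{open} condition in flat families, so at best you get one implication (special fibre semistable $\Rightarrow$ nearby fibres semistable), and the converse direction --- $\sI(0)$ semistable $\Rightarrow$ $E$ semistable --- simply does not follow; a family of semistable sheaves can perfectly well specialize to an unstable one. There is also a circularity in invoking the projective moduli space $\mathrm{M}(\sO_X,P_d)$ to host the family, since that space only parametrizes semistable sheaves. The paper avoids deformation theory entirely here and argues numerically: by \cite[Lemma 3.2]{CK} a generalized line bundle of index $b$ is semistable (resp.\ stable) if and only if $b\le -\deg{\sN}$ (resp.\ $b<-\deg{\sN}$), and since $b_E(0)=-\deg{\sN}-s(E)$, this is equivalent to $s(E)\ge 0$ (resp.\ $>0$), i.e.\ to (semi)stability of $E$; the stability of $\sI(h)$ for $h\ge 1$ then follows from $b_E(h)<b_E(0)\le-\deg{\sN}$. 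You should replace your specialization argument with this numerical one (your treatment of $h\ge1$ already gestures at it, but the biconditional for $h=0$ needs it too).
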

\begin{proof}
Let $E$ be a vector bundle as in the hypotheses, set $\delta=\deg{\sN}=2\bar{g}-g-1$ (so under our assumptions it is a negative integer), $s=s(E)$ and $b(h)=b_E(h)=-\delta-s-2h$.

By Fact \ref{Fact2}\ref{Fact2:3} the assertion is equivalent to the fact that $b(h)$ is non-negative and that there exists a line bundle $L(h)\subset E$ of degree $(e+\delta+b(h))/2$ such that $\Hom((E/L(h))\otimes\sN,L(h))\neq\{0\}$.

Let us begin with $h=0$.
Consider $M\subset E$ a maximal line subbundle; by Fact \ref{Fact2}\ref{Fact2:1} it holds that 
\[
\deg{M}=\frac{e+\delta+b(0)}{2}=\frac{e-s(e)}{2}\ge\frac{e-\bar{g}}{2}>\frac{e+\delta}{2},
\]
 where the last inequality holds because $g> 4\bar{g}-2$ is equivalent to $\delta<2-2\bar{g}\le-\bar{g}$ (recall that $\bar{g}\ge 2$). Hence $b(0)$ is a non-negative integer, as desired. 

Thus it is sufficient to show that $\Hom((E/M)\otimes\sN,M)\neq\{0\}$. It is immediate to verify that $M$ is a saturated subsheaf of $E$, being a maximal line subbundle, hence $E/M$ is a line bundle on $\Xred$. So it holds that $\Hom((E/M)\otimes\sN,M)\cong \operatorname{H}^0(\Xred,(E/M)^{-1}\otimes\sN^{-1}\otimes M)$. The latter is surely nontrivial if $\deg{(E/M)^{-1}\otimes\sN^{-1}\otimes M}=-s-\delta\ge \bar{g}$, by Riemann-Roch formula. This is true if either $g> 4\bar{g}-2$ (i.e. $\delta\le -2\bar{g}$) or $g=4\bar{g}-2$ (i.e. $\delta=1-2\bar{g}$) and $s<\bar{g}$. In the first case it holds that $-s-\delta\ge-\bar{g}-\delta\ge\bar{g}$, where the first inequality is due to Fact \ref{Fact2}\ref{Fact2:1}, asserting that $s\le \bar{g}$. In the second case it is immediate that  $-s-\delta>-\bar{g}-\delta\ge\bar{g}-1$.

By Fact \ref{Fact2}\ref{Fact2:1}, it remains only the case $\delta=1-2\bar{g}$ and $s=\bar{g}$, in which $E$ is a general stable rank $2$ vector bundle on $\Xred$. Set $F(M)=(E/M)^{-1}\otimes\sN^{-1}\otimes M$. It holds that $\deg{F(M)}=-s-\delta= \bar{g}-1$, so $h^0(F(M))>0$ if and only if $F(M)$ belongs to the theta-divisor of $\operatorname{Pic}^{\bar{g}-1}(\Xred)$, which is ample. By the last assertion of Fact \ref{Fact2}\ref{Fact2:2} there exists a $1$-dimensional family of maximal line subbundles of $E$ and so the map $M\to F(M)$ defines a curve in $\operatorname{Pic}^{\bar{g}-1}(\Xred)$ (by the fact $F(M)\otimes\sN=M^2\otimes(\det(E))^{-1}$, hence $F(M)\neq F(M')$ for two generic maximal line subbundles $M$ and $M'$). Thus by the ampleness of the theta-divisor the intersection of this curve with it is not zero; hence we can conclude that there exists at least one maximal line subbundle $M$ such that $F(M)$ belongs to it.

Now consider $1\le h\le[(-\delta-s)/2]$. By the previous part it holds that $b(0)=-\delta-s$ is non-negative, therefore also $b(h)=b(0)-2h$ is non-negative.

Let $M$ be, as above, a maximal line subbundle of $E$ such that $\Hom((E/M)\otimes\sN,M)\neq\{0\}$ (in other words such that $(E/M)\otimes\sN$ can be seen as a line subbundle of $M$, because any non-trivial morphism between line bundles is injective) and let $L(h)\subset M$ a line subbundle of degree $(e+\delta+b(h))/2$. It is immediate to verify that the torsion-free part of $E/(L(h))$ is just $E/M$, hence $\Hom((E/L(h))\otimes\sN,L(h))=\Hom((E/M)\otimes\sN,L(h))$. It is clear that there exists an $L(h)\subset M$ such that $\Hom((E/M)\otimes\sN,L(h))$ is not trivial if and only if there exists an $L(h)\subset M$ containing $(E/M)\otimes\sN$; this holds if and only if
\[\deg{(E/M)\otimes\sN}=\frac{e+s+2\delta}{2}\le\deg{L(h)}=\frac{e-s-2h}{2},\]
which is equivalent to $h\le-\delta-s$. Requiring also that $b(h)\ge 0$ we find exactly $1\le h\le [(-\delta-s)/2]$. So for any integer $h$ in the desired range it is possible to find a line bundle $L(h)\subset M\subset E$ with the required properties.
     
The assertion about (semi)stability is almost trivial: $E$ is semistable (resp. stable) if and only if $s(E)\ge 0$ (resp. $>0$) if and only if $b(0)=-\delta- s\le\ -\delta$ (resp. $< -\delta$) if and only if $\sI(0)$ is semistable (resp. stable), where the first equivalence holds by definition and the last one is \cite[Lemma 3.2]{CK}. Moreover $b(h)< b(0)$ for any $1\le h\le[(-\delta-s)/2]$ hence $\sI(h)$ is stable, if $b(0)\le-\delta$, again by \cite[Lemma 3.2]{CK}. 
\end{proof}
\begin{rmk}\label{Rmk:1}
It follows immediately from the proof and from Fact \ref{Fact2}\ref{Fact2:3} that the estimate of the indices in the Theorem is sharp. Indeed, if there were an index $b>b(0)$ for which the theorem were true, one should have a line subbundle $L\subset E$ of degree strictly greater than that of a maximal line subbundle, which is a contradiction. On the other hand, if $b<b([(-\deg{\sN}-s(E))/2])$ and is of its same parity, $b$ is negative and so it cannot be the index of a generalized line bundle. 
\end{rmk}
\begin{cor}\label{Cor:1}
If $\bar{g}\ge 2$ and $g\ge 4\bar{g}-2$, then $\mathrm{M}(2,e)$ is not an irreducible component of $\mathrm{M}(\sO_X,P_d)$. More precisely it is contained in $\bigcap\limits_{\substack{
j=0\\
j\equiv m \pod 2
}}^{m}\bar{Z}_{j}$, with $m=-\deg{\sN}-\bar{g}+1$ if $e-\bar{g}$ is odd and $m=-\deg{\sN}-\bar{g}$ if $e-\bar{g}$ is even. Moreover, it is not contained in any other irreducible component.
\end{cor}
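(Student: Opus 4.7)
The plan is to apply Theorem~\ref{Theorem1} at the generic point of $\mathrm{M}(2,e)$ and then propagate the conclusion to the whole moduli space by irreducibility. Concretely, I would first isolate the open dense locus $U\subset\mathrm{M}(2,e)$ consisting of stable bundles $E$ attaining the generic $s$-invariant predicted by Fact~\ref{Fact2}\ref{Fact2:2}: namely $s(E)=\bar{g}-1$ if $e-\bar{g}$ is odd and $s(E)=\bar{g}$ otherwise. For such an $E$, the integer $b_{E}(0)=-\deg{\sN}-s(E)$ is exactly the $m$ appearing in the statement. Theorem~\ref{Theorem1} then gives, for every $0\le h\le [m/2]$, a deformation of the direct image of $E$ on $X$ to a stable generalized line bundle $\sI(h)$ of index $m-2h$, so that the direct image lies in $\bar{Z}_{m-2h}$. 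Thus $U$ is contained in each of the relevant $\bar{Z}_{j}$, and since these are closed while $U$ is dense, the inclusion $\mathrm{M}(2,e)\subseteq\bigcap_{j}\bar{Z}_{j}$ follows for the range stated.

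Next I would show that $\mathrm{M}(2,e)$ is not contained in $\bar{Z}_{j}$ for any $j>m$ with $j\equiv m\pmod{2}$; these are the only other potentially relevant irreducible components, since by Fact~\ref{Fact1}\ref{Fact1:1} they satisfy $0\le j\le -\deg{\sN}-1$ and have the same parity as $d$, which equals that of $m$. It suffices to exhibit one $E\in U$ whose direct image on $X$ is not in $\bar{Z}_{j}$. If it were, the ``only if'' direction of Fact~\ref{Fact2}\ref{Fact2:3} would produce a line subbundle $L\subset E$ of degree $(e+\deg{\sN}+j)/2$, strictly exceeding the maximal value $(e-s(E))/2=(e+\deg{\sN}+m)/2$ allowed by Fact~\ref{Fact2}\ref{Fact2:1}, a contradiction; this is essentially the sharpness already recorded in Remark~\ref{Rmk:1}. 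The non-component claim then follows by a quick dimension comparison: $\dim\mathrm{M}(2,e)=4\bar{g}-3$, while $\dim\bar{Z}_{m}=g\ge 4\bar{g}-2$ by hypothesis, so the inclusion $\mathrm{M}(2,e)\subsetneq\bar{Z}_{m}$ is strict and $\mathrm{M}(2,e)$ cannot be a maximal irreducible closed subset of $\mathrm{M}(\sO_{X},P_{d})$.

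The step I expect to require most care is the $h=0$ case of Theorem~\ref{Theorem1}: stability of $\sI(0)$ is equivalent to stability of $E$ and not merely to its semistability, so a generic semistable bundle is not enough to conclude membership in $\bar{Z}_{m}$ itself. Restricting $U$ to the stable locus (still open and dense in $\mathrm{M}(2,e)$ for $\bar{g}\ge 2$) is what keeps $\bar{Z}_{m}$ inside the intersection without additional work; the remaining values $h\ge 1$ are unproblematic because Theorem~\ref{Theorem1} guarantees stability of $\sI(h)$ whenever $E$ is merely semistable.
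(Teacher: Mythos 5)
Your argument is correct and follows the paper's proof in all essentials: the containment in $\bigcap\bar{Z}_j$ comes from Theorem \ref{Theorem1} applied to the dense locus of stable bundles attaining the generic value of $s(E)$ from Fact \ref{Fact2}\ref{Fact2:2} (with the same attention to needing stability, not just semistability, for the case $h=0$), and the exclusion of the components $\bar{Z}_j$ with $j>m$ is exactly the sharpness recorded in Remark \ref{Rmk:1}. The only material you add is the explicit dimension count $\dim\mathrm{M}(2,e)=4\bar{g}-3<g=\dim\bar{Z}_m$ to make the inclusion strict, which the paper leaves implicit; this is a harmless elaboration rather than a different route.
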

\begin{proof}
The hypotheses are the same of the Theorem, so it follows immediately from it and from Fact \ref{Fact2}\ref{Fact2:2} that $\mathrm{M}(2,e)$ is contained in $\bigcap\limits_{\substack{
j=0\\
j\equiv m \pod 2
}}^{m}\bar{Z}_{j}$. The last assertion is a trivial consequence of the Remark.
\end{proof}

\paragraph*{\textbf{Acknowledgements}}\noindent\\
This paper is born as an aside to my doctoral thesis, which is in progress and is about the compactified Jacobian of a primitive multiple curve of multiplicity greater or equal than $3$. I am grateful to my supervisor, Filippo Viviani, who introduced me to the articles \cite{CK} and \cite{DR} and more generally to the subject and, moreover, gave me various suggestions about the exposition. I am also grateful to Edoardo Sernesi: my knowledge of one of the key ingredients of the proof, namely Segre-Nagata Theorem (i.e. Fact \ref{Fact2}\ref{Fact2:1}) is due to his unpublished notes about algebraic curves which he distributed confidentially in a preliminary version during a doctoral course about Brill-Noether theory.
I would thank also the anonymous referee for his useful comments.\\
A.m.D.g.


\begin{thebibliography}{}
\bibitem[BE]{BE} D. Bayer, D. Eisenbud, \emph{Ribbons and their canonical embeddings}. Trans. Am. Math. Soc. 347 (3) (1995), 719--756.
\bibitem[CK]{CK} D. Chen, J.L. Kass, \emph{Moduli of generalized line bundles on a ribbon}. J. Pure Applied Algebra 220 (2016), 822--844.
\bibitem[DEL]{DEL} R. Donagi, L. Ein, R. Lazarsfeld, \emph{Nilpotent cones and sheaves on K3 surfaces}, in: \emph{Birational Algebraic
Geometry}, Contemp. Math. 207, Amer. Math. Soc.,
Providence, RI, (1997), 51--61.
\bibitem[D]{DR} J.-M. Dr\'{e}zet, \emph{Faisceaux coh\'{e}rents sur le courbes multiples}. Collect. Math. 57 (2) (2006), 121--171.
\bibitem[LN]{LN} H. Lange, M.S. Narasimhan, \emph{Maximal subbundles of rank two vector bundles on curves},
Math.Ann. 266 (1983), 55--72.
\bibitem[M]{M} M. Maruyama, \emph{On classification of ruled surfaces}. Lectures in Mathematics. Kyoto Univ. No. 3, Tokyo (1970).
\bibitem[N]{N} M. Nagata, \emph{On self intersection number of vector bundles of rank 2 on a Riemann surface}. Nagoya Math. J. 37 (1970), 191--196. 
\bibitem[S]{S} C. Segre, \emph{Recherches g\'{e}n\'{e}rales sur les courbes et les surfaces r\'{e}gl\'{e}s alg\'{e}briques II}. Math. Ann. 34 (1889), 1--25.

\end{thebibliography}
\end{document}